\documentclass[12pt]{article}
 \usepackage{amsmath}
\usepackage{amsfonts}
\usepackage{amssymb}
 \oddsidemargin=-.3cm \topmargin=-0.5in
\textwidth=6.5in
 \textheight=8.5in

%\pagenumbering{roman} \setcounter{page}{0}
%\usepackage[notcite,notref]{showkeys}

\newcommand{\cA}{{\mathfrak A}}

\newtheorem{proposition}{Proposition}[section]
\newtheorem{example}{Example}[section]

\newtheorem{definition}{Definition}[section]

\newcommand{\bgeqn}{\begin{eqnarray}}
\newcommand{\edeqn}{\end{eqnarray}}
\newcommand{\bgeq}{\begin{eqnarray*}}
\newcommand{\edeq}{\end{eqnarray*}}
\newcommand{\bec}{\begin{center}}
\newcommand{\enc}{\end{center}}

\def\Min{\mathop{\rm Min}}

\newcommand{\Z}{{\cal Z}}
\newcommand{\F}{{\cal F}}

\newcommand{\G}{{\cal G}}

\newcommand{\X}{{\cal X}}

\newcommand{\LL}{{\cal L}}

\newcommand{\fb}{\rule[-2pt]{4pt}{8pt}}

\newcommand{\be}{\begin{equation}}
\newcommand{\ee}{\end{equation}}

\def\w{\omega}

\def\O{\Omega}

\def\ess {{\rm ess}\sup}

			%	first stochastic order for measures
			%	first stochastic order, strict
	%	second stochastic order for densities
			%	second stochastic order, strict

\newcommand{\avr}{{\sf AV@R}}

\def\bbr{{\Bbb{R}}} %real numbers
\def\bbe{{\Bbb{E}}} %expectation

\newcommand{\ind}{{\mbox{\boldmath$1$}}}
 %indicator function

\begin{document}
\title{\bf  Decomposability and time consistency of risk averse  multistage programs }

\author{
{\bf A. Shapiro} \\
School of Industrial and Systems Engineering\\
Georgia Institute of Technology\\
Atlanta, GA 30332-0205\\
 \and
{\bf K. Ugurlu}\\
Department of Mathematics\\
University of Southern California\\
Los Angeles, CA 90089
}
\date{}
 \maketitle

 \thispagestyle{empty}

\begin{abstract}
  
  Two approaches to time consistency of risk averse multistage stochastic   problems were discussed in the recent literature. In one approach certain properties of the corresponding  risk measure  are postulated which imply its  decomposability. The other approach deals directly with conditional   optimality of solutions of the considered  problem. The aim of this paper is to discuss a relation between these two approaches.
  \end{abstract}

\noindent
{\bf Keywords:} Stochastic programming, coherent risk measures, time consistency.

\setcounter{equation}{0}
\section{Introduction}
\label{sec:intr}

Consider the following risk averse  multistage stochastic  optimization problem
\begin{equation}
\label{mul1}
\begin{array}{cll}
\Min&\varrho\big [ f_1(x_{1}) + f_2(
x_{2},\w) +   \cdots +  f_T(x_{T},\w)
\big],\\
{\rm s.t.} &
x_1\in \X_1,\; x_t\in \X_t(x_{t-1},\w),\;
t=2,...,T-1,
\end{array}
\end{equation}
where optimization is performed over policies $\{x_1,x_2(\w),...,x_T(\w)\}$ adapted to a filtration
$\F_1\subset \F_2\subset \cdots\subset \F_T$  and   $\varrho(Z)$ is a  risk measure  (cf.,   \cite[Section 6.8.5]{SDR}). In particular if $\varrho$ is the expectation operator, then this becomes the standard  risk neutral formulation.

Basically two approaches to time consistency of risk averse multistage   problems were discussed in the recent literature. In one approach certain properties of risk measure $\varrho$ are postulated which imply  decomposability of $\varrho$ and hence possibility of writing problem  (\ref{mul1}) in a nested form similar to the risk neutral case (cf., \cite{rusz10} and references therein). The other approach deals directly with sequential conditional   optimality of solutions of problem (\ref{mul1}). This  is related to the so-called
Bellman's Principle of Optimality,  \cite{bell57}:
 {\em ``An optimal policy has the property that whatever the initial state and initial decision are, the remaining decisions must constitute an optimal policy with regard to the state resulting from the first decision"}.
In a slightly different form this principle had been formulated in \cite{carp} as: {\em
``The decision maker formulates
an optimization problem at time $t_0$ that yields a sequence of optimal decision
rules for $t_0$ and for the following time steps $t_1,...,t_N=T$. Then, at the next time
step $t_1$, he formulates a new problem starting at $t_1$ that yields a new sequence of
optimal decision rules from time steps $t_1$ to $T$. Suppose the process continues until
time $T$ is reached. The sequence of optimization problems is said to be dynamically
consistent if the optimal strategies obtained when solving the original problem at
time $t_0$ remain optimal for all subsequent problems."}
From a conceptual point of view this is a natural principle - an optimal solution obtained by solving the problem at the first stage remains optimal from the point of view of later stages.

A natural question is whether the  decomposability of $\varrho$  implies  dynamical consistency of optimal solutions of problem  (\ref{mul1}). It is shown in \cite[Proposition 6.80]{SDR} that indeed such implication holds if problem  (\ref{mul1}) has unique optimal solution or in case of multiple solutions  under a stronger notion of  strict monotonicity of the involved  nested risk measures. The aim of this paper is to make a further investigation of relations between these two approaches to time consistency. In particular we construct an example showing that in absence of the strict monotonicity condition,   the decomposability of $\varrho$ does not necessarily imply the dynamical consistency when problem   (\ref{mul1}) possesses    several optimal solutions.

\setcounter{equation}{0}
\section{Time consistency of risk measures}
\label{sec:tc}

In this section we overview some results on  risk measures and decomposability of the corresponding risk averse problems. We follow    \cite[Section 6.8]{SDR} and use the following framework. Let $(\O,\F,P)$ be a probability space and $\F_1\subset \F_2\subset \cdots\subset \F_T$ be a sequence of sigma algebras (a filtration)  with sigma algebra  $\F_1=\{\emptyset,\O\}$ being trivial  and $\F_T=\F$. For $p\in [1,\infty]$ consider the spaces $\Z_t:=L_p(\O,\F_t,P)$ of $\F_t$-measurable  $p$-integrable functions (random variables)  $Z:\O\to\bbr$. Note that  $\Z_1\subset \cdots\subset \Z_T$, $\Z_T=L_p(\O,\F,P)$, and since $\F_1$ is trivial  the space $\Z_1$ consists of constant on $\O$ functions and can be identified with $\bbr$.

It is said that risk measure $\varrho:\Z_T\to\bbr$ is {\em decomposable} if it can be represented as the composition  $\varrho =\rho_2\circ \cdots \circ \rho_T$ of coherent  conditional risk mappings $\rho_{t}:\Z_{t}\to \Z_{t-1}$, $t=2,...,T$. In particular for $Z=Z_1+...+ Z_T$, $Z_t\in \Z_t$, we have then
\begin{equation}\label{ris1}
 \varrho(Z)=Z_1+\rho_2\left(Z_2+\cdots+\rho_{T-1}
 \big(Z_{T-1}+\rho_T(Z_T)\big)\right).
\end{equation}
An example of decomposable risk measure is the expectation operator. That is,
\begin{equation}\label{ris2}
 \bbe[Z]=\bbe_{|\F_1}\big(\cdots \bbe_{|\F_{T-1}}(Z)\big),
\end{equation}
with  $\rho_t$ is  given by the conditional expectation $\bbe_{|\F_{t-1}}$. Note that $\bbe_{|\F_1}=\bbe$ since $\F_1=\{\emptyset,\O\}$.
Another example of the decomposable risk measure is the essential supremum operator $\varrho(Z)=\ess (Z)$, $Z\in L_\infty (\O,\F,P)$, with the corresponding mappings $\rho_t$ are given by the respective conditional essential supremum operators.

Recall that  mapping  $\rho_{t}:\Z_{t}\to \Z_{t-1}$ is said to be a coherent conditional risk mapping if it satisfies the following conditions (R1)--(R4)
(for real valued risk measures these conditions were introduced in the pioneering paper \cite{ADEH:1999}).
The notation $Z  \succeq  Z'$ means that $Z(\w)\ge Z'(\w)$ for a.e. $\w\in \O$. We also use notation $Z  \succ   Z'$ meaning that $Z  \succeq  Z'$ and $Z\ne Z'$, that is  $Z  \succeq  Z'$ and  $Z(\w)>Z'(\w)$ on a set of positive probability.
\begin{description}
\item[(R1)] {\sl Convexity}:
\[
\rho_{t}(\alpha Z  + (1-\alpha)Z') \preceq
\alpha\rho_{t}(Z) + (1-\alpha)\rho_{t}(Z'),
\]
for any $Z,Z'\in\Z_{t}$ and $\alpha\in [0,1]$.
 \item[(R2)] {\sl Monotonicity}: If
$Z,Z'\in\Z_{t} $ and  $Z  \succeq  Z'$, then
$\rho_{t}(Z)\succeq \rho_{t}(Z')$.
\item[(R3)] {\sl Translation Equivariance}: If
$Y\in \Z_{t-1} $ and $Z\in \Z_{t} $, then
$\rho_{t}(Z+Y)=\rho_{t}(Z)+Y.$
\item[(R4)]
{\sl Positive homogeneity}: If $\alpha \geq 0$ and
$Z\in \Z_{t}$, then $\rho_{t}(\alpha
Z)=\alpha\rho_{t}(Z)$.
\end{description}
It is shown in \cite{rusz10} that some natural conditions (axioms) necessarily imply the decomposability of risk measure $\varrho$.

In the   definition below  of time (dynamical) consistency of optimal policies we follow
\cite[Definition 6.81]{SDR}.

\begin{definition}
For a decomposable risk measure $\varrho:\Z_T\to\bbr$,
we say that an optimal policy $\{\bar{x}_1,\bar{x}_2(\w),...,\bar{x}_T(\w)\}$ of problem {\rm (\ref{mul1})} is {\em time consistent} if the policy $\{\bar{x}_t(\w),...,\bar{x}_T(\w)\}$
 is optimal for problem
\begin{equation}
\label{ris3}
\begin{array}{cll}
\Min&\varrho_{t,T}\big [ f_t(x_{t},\w) +
     \cdots +  f_T(x_{T},\w)
\big],\\
{\rm s.t.} &
 x_\tau\in \X_\tau(x_{\tau-1},\w),\;
\tau=t,...,T,
\end{array}
\end{equation}
 conditional on $\F_t$ and $\bar{x}_{t-1}$,   $t=2,...,T$,
 where $\varrho_{t,T}:=\rho_t\circ \cdots \circ \rho_T:\Z_T\to\Z_{t-1}$.
 \end{definition}

It was believed that  the implication:
 \begin{equation}
\label{ris4}
{\rm
  ``decomposability\; of\;  \varrho"}\;  \Rightarrow \;  {\rm ``time\; consistency\; of\; optimal \;solutions"}
  \end{equation}
holds for any decomposable risk measure $\varrho$.
It is shown in \cite[Proposition 6.80]{SDR} that indeed if problem (\ref{mul1}) has a {\em unique} optimal solution, then the implication (\ref{ris4}) follows. However, when  problem (\ref{mul1}) has more than one optimal solution, in order to ensure
 the implication (\ref{ris4}) a stronger notion of strict monotonicity was needed in the proof.
\begin{description}
 \item[(R$'$2)] {\sl Strict Monotonicity}: If
$Z,Z'\in\Z_{t} $ and  $Z  \succ   Z'$, then
$\rho_{t}(Z) \succ  \rho_{t}(Z')$.
\end{description}
In the example below we demonstrate that indeed  without strict monotonicity the implication (\ref{ris4}) may fail.

\begin{example}
\label{ex-1}
{\rm
Consider   the following settings. Number of stages  $T=3$, the underlying probability space is finite and  defined by the following  scenario tree. At the first stage there is one root node $\w_1$, at the  second stage there are two nodes $\w_2^1$ and $\w_2^2$ with respective probabilities 1/2 of moving to these nodes; there are two branches for each node at stage 2 with 4  nodes at stage three with nodes   $\w_3^1$ and $\w_3^2$ denoting children nodes of node $\w_2^1$ and  nodes   $\w_3^3$ and $\w_3^4$ denoting children nodes of node $\w_2^2$,  and  respective conditional probabilities of 1/2. So the total number of scenarios is 4 each with equal probability 1/4. The decision variables are one dimensional and cost functions are linear, i.e.,
$f_1(x_1):=c_1 x_1$, $f_2(x_{2},\w_2^1):=c_2^1 x_2$, $f_2(x_{2},\w_2^2):=c_2^2 x_2$,
$f_3(x_{3},\w_2^j):=c_2^j x_3$, $j=1,...,4$. Feasible sets are $\X_1:=\{0\}$, $\X_2(x_1,\w_2^1)=\X_2(x_1,\w_2^2):=\{0\}$, $\X_3(x_2,\w_3^j):=[1,2]$, $j=1,...,4$.
The risk measure $\varrho$ is taken to be the max-operator, i.e., $\varrho(Z):=\max_{\w\in \O}Z(\w)$.

 As it was pointed above this risk measure is decomposable (since  the space $\O$ is finite the max-operator is the same  as the $`\ess$' operator). However, it is not difficult to see that the max-operator does not satisfy the strict monotonicity condition (R$'$2).

Note that here we need to consider decision $x_3=x_3(\w_3)$ as a function of $\w_3$ only.
  Hence problem (\ref{mul1}) takes the form of finding   policy
$x_1,x_2(\w_2^i),x_3(\w_3^j)$ which solves the minimax problem
\begin{equation}
\label{mul2}
\Min \max_\w\big \{c_1x_1+c_2^i x_2(\w_2^i)+c_3^j x_3(\w_3^j)
\big\},
\end{equation}
over feasible policies.
Consider the following   coefficients $c_1:=0$,   $c_2^1=c_2^2:=0$, $c_3^1=c_3^2:=1$, $c_3^3=c_3^4:=4$.
 Clearly  the following  policy $x_1=0$, $x_2(\w_2^1)=x_2(\w_2^2)=0$,
$x_3(\w_3^j)=1$, $j=1,...,4$, is an optimal solution of problem (\ref{mul2})
 with the corresponding optimal value 4. Also this policy is time consistent.

On the other hand consider    policy
$x_1=0$, $x_2(\w_2^1)=x_2(\w_2^2)=0$, $x_3(\w_3^1)=x_3(\w_3^2)=2$, $x_3(\w_3^3)=x_3(\w_3^4)=1$. This policy has also  value 4 and hence is optimal.
However, conditional on $\w_2=\w_2^1$ this policy has value 2, while the corresponding conditional optimal value  is 1. Hence conditional on $\w_2=\w_2^1$ this policy is not optimal, and thus is not time consistent.  \fb
}
\end{example}

Consider now the Average Value-at-Risk measure
\[
 \avr_\alpha(Z)=\inf_{t\in \bbr}\left\{t+\alpha^{-1}\bbe[Z-t]_+\right\},\;\alpha\in (0,1].
 \]
This is a coherent risk measure, but for $\alpha\in (0,1)$  it is not strictly monotone.
  Suppose that the risk measure $\varrho$ is given as nested $\avr_\alpha$ risk measure, e.g., for $T=3$ it is $\varrho(Z)=\avr_\alpha(\avr_{\alpha|\F_2}(Z))$. Note that for finite space $\O=\{\w_1,...,\w_m\}$, equipped with equal probabilities $1/m$, and for $\alpha<1/m$, it follows that $ \avr_\alpha(Z)=\max_{\w\in \O}Z(\w)$. Therefore for $\alpha\in (0,1/2)$,  Example \ref{ex-1} gives an example of a time inconsistent optimal policy for decomposable  nested $\avr$  risk measure.

\setcounter{equation}{0}
\section{Strict monotonicity}
\label{sec:str}

As it was pointed in the previous section, in order to ensure time consistency of optimal policies the stronger condition of strict monotonicity is needed. Let us first consider a real valued coherent risk measure $\rho:\Z\to \bbr$, where $\Z =L_p(\O,\F,P)$, $p\in [1,\infty)$.
 Consider the  dual space $\Z^*=L_q(\O,\F,P)$, $1/p+1/q=1$, $q\in (1,\infty]$.
 It follows that $\rho(\cdot)$ is continuous (in the norm topology of  $L_p(\O,\F,P)$) and has the following dual representation (cf., \cite{rs06})
\begin{equation}\label{mon1}
 \rho(Z)=\sup_{\zeta\in \cA}\int_\O \zeta(\w) Z(\w)dP(\w),
\end{equation}
where $\cA\subset \Z^*$ is a convex weakly$^*$ compact set of  density functions. Recall that the subdifferential of $\rho$ is given by
 \begin{equation}\label{mon2}
 \partial \rho(Z)=\arg\max_{\zeta\in \cA}\int_\O \zeta(\w) Z(\w)dP(\w).
\end{equation}
Note that since $\cA$ is   weakly$^*$ compact, the  set in the right hand side of (\ref{mon2}) is nonempty.

The following result is an extension of \cite[Proposition 6.38]{SDR}.

\begin{proposition}
\label{pr-coh}
Coherent risk measure $\rho:\Z\to\bbr$ satisfies the strict monotonicity condition iff the following condition holds:
\begin{equation}\label{mon3}
 \zeta(\w)>0\; {\rm for\;a.e.}\;\w\in \O,\;\forall Z\in \Z,\;\forall \zeta\in \partial \rho(Z).
\end{equation}
\end{proposition}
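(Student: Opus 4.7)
The plan is to exploit the dual representation $\rho(Z) = \sup_{\zeta \in \cA}\langle \zeta, Z\rangle$ together with the subgradient identification (\ref{mon2}). Before starting, I recall that (R2) already forces every $\zeta \in \cA$ to satisfy $\zeta \succeq 0$, so (\ref{mon3}) is precisely the strengthening from nonnegativity to strict positivity of those densities attained as subgradients. The two directions are then short variational arguments.

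For the ``if'' direction, assume (\ref{mon3}) and take $Z, Z' \in \Z$ with $Z \succ Z'$. Since $\cA$ is weak$^*$-compact, $\partial \rho(Z')$ is nonempty; pick any $\eta \in \partial \rho(Z')$. The dual formula gives $\rho(Z) \geq \langle \eta, Z\rangle$, while (\ref{mon2}) gives $\rho(Z') = \langle \eta, Z'\rangle$, so
$$
\rho(Z) - \rho(Z') \;\geq\; \int_\O \eta(\w)\bigl[Z(\w) - Z'(\w)\bigr]\, dP(\w) \;>\; 0,
$$
where the strict inequality uses $\eta > 0$ a.e.\ together with the fact that $Z - Z' \succeq 0$ is strictly positive on a set of positive probability.

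For the ``only if'' direction, I argue by contradiction: suppose (\ref{mon3}) fails, so there exist $Z \in \Z$ and $\zeta \in \partial \rho(Z)$ with $A := \{\w : \zeta(\w) = 0\}$ of positive probability. Set $Z' := Z - \ind_A$, which lies in $\Z$ since $\ind_A \in L_p(\O,\F,P)$, and note $Z \succ Z'$. The subgradient inequality for the convex function $\rho$ at $Z$ yields
$$
\rho(Z') \;\geq\; \rho(Z) + \langle \zeta, Z' - Z\rangle \;=\; \rho(Z) - \int_A \zeta\, dP \;=\; \rho(Z),
$$
while ordinary monotonicity (R2) applied to $Z' \preceq Z$ gives $\rho(Z') \leq \rho(Z)$. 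Hence $\rho(Z') = \rho(Z)$, contradicting strict monotonicity.

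No substantive obstacle is anticipated: the proof is essentially a one-line use of the subgradient inequality in each direction. The only delicate point in the necessity argument is to produce a perturbation (namely $-\ind_A$) that is annihilated by $\zeta$ yet witnesses $Z \succ Z'$ and stays inside $\Z$; both properties are immediate here because $\ind_A$ is bounded and $P(A) > 0$.
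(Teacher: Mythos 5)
Your proposal is correct and follows essentially the same argument as the paper: sufficiency via picking a subgradient at $Z'$ and comparing $\rho(Z)\ge\langle\eta,Z\rangle$ with $\rho(Z')=\langle\eta,Z'\rangle$, and necessity via the perturbation $Z'=Z-\ind_A$ on the null set of $\zeta$. The only cosmetic difference is that you invoke the subgradient inequality where the paper writes out $\rho(Z)=\int\zeta Z\,dP=\int\zeta Z'\,dP\le\rho(Z')$ directly from the dual representation; these are the same computation.
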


\begin{proof}
Suppose that condition (\ref{mon3}) holds. Let $Z\succ Z'$ and $\zeta\in \partial \rho(Z')$. Then by    (\ref{mon2}) we have
$\rho(Z')=\int  \zeta  Z' dP.$
Because of (\ref{mon1}) we also have that $\rho(Z)\ge \int  \zeta  Z dP$. Since $Z\succ Z'$ it follows by  condition (\ref{mon3})   that  $\int  \zeta  Z dP>\int  \zeta  Z' dP$, and hence $\rho(Z)>\rho (Z')$.

For the converse implication we argue by a contradiction. Suppose that there exist $\zeta\in \partial \rho(Z)$ and $A\in \F$ such that $P(A)>0$ and  $\zeta(\w)=0$ for all $\w\in A$. Consider $Z':=Z-\ind_A$. Clearly $Z\succ Z'$ and hence $\rho(Z)\ge \rho(Z')$. Moreover
\[
\rho(Z)=\int_\O  \zeta  Z  dP = \int_\O  \zeta  Z  dP-\int_A \zeta dP=
\int_\O  \zeta  Z' dP\le \rho(Z').
\]
It follows that $\rho(Z)=\rho(Z')$, a contradiction with strict monotonicity. \fb
\end{proof}
\\

Formulas for subdifferentials of various coherent risk measures can be found e.g. in \cite[Section 6.3.2]{SDR}.

For a random variable $Z:\O\to\bbr$ consider the corresponding cumulative distribution function (cdf) $F_Z(z)=P(Z\le z)$ and the (left side) quantile $F_Z^{-1}(t)=\inf\{z:F_Z(z)\ge t\}$.  It is said that risk measure $\rho:\Z\to\bbr$ is {\em law invariant} if for any $Z,Z'\in \Z$ having the same cdf it follows that $\rho(Z)=\rho(Z')$. Assume  that $\rho$ is law invariant and  either the space $(\O,\F,P)$ is nonatomic or $\O=\{\w_1,...,\w_m\}$ is finite  equipped with equal probabilities $1/m$. Then the dual representation (\ref{mon1}) can be written in the following form
\begin{equation}\label{mon4}
 \rho(Z)=\sup_{\sigma\in \Upsilon}\int_0^1 \sigma(t) F_Z^{-1}(t)dt,
\end{equation}
where $\Upsilon\subset \LL_q$ is a set of  spectral functions (e.g., \cite[Section 6.3.3]{SDR}).
A function $\sigma:[0,1)\to\bbr_+$ is said to be spectral if it is monotonically nondecreasing, right side continuous and
$\int_0^1 \sigma(t) dt=1$. By $\LL_p$ we denote the corresponding space of $p$-integrable functions defined on $\O=[0,1]$ equipped with its Borel sigma algebra and uniform probability distribution. We can assume that the set $\Upsilon$ is weakly$^*$ compact. In particular if the set $\Upsilon$ is a singleton, then the risk measure $\rho$ is said to be spectral. For example $\avr_\alpha$ is a spectral risk measure with the corresponding spectral function
$\sigma(\cdot)=\alpha^{-1}\ind_{[1-\alpha,1]}(\cdot)$.

By Proposition \ref{pr-coh} we have that law invariant risk measure $\rho$ satisfies the strict monotonicity condition iff every spectral function
$\sigma\in\arg\max_{\sigma\in \Upsilon}\int_0^1 \sigma(t) F_Z^{-1}(t)dt$ is strictly positive on the interval $(0,1)$. In particular, it follows that   the
$ \avr_\alpha$
 is not  strictly monotone for $\alpha\in (0,1)$.

Let $\G$ be a strict  subalgebra of $\F$, i.e. $\G\ne \F$,  and $\Z':=L_p(\O,\G,P)$. With a law invariant coherent  risk measure $\rho:\Z\to\bbr$ is associated coherent risk mapping  $\rho_{|\G}:\Z\to\Z'$
by replacing  $F^{-1}_Z$ in (\ref{mon4}) with its conditional counterpart
$F_{Z|\G}^{-1}$.
 We have then that  $ \rho_{|\G}$ is strictly monotone iff $\rho$ is strictly monotone. In particular the conditional $\avr_{\alpha|\G}$ is not strictly monotone for $\alpha\in (0,1)$.
 \\
 \\
{\bf Acknowledgments}\\
\\
 Research of the first  author  was partly supported by  DARPA EQUiPS program, grant SNL 014150709.

\end{document}